\documentclass[12pt]{amsart} 

\usepackage{a4wide}
\usepackage{amsmath, amssymb, amsfonts, amsthm}

\DeclareMathOperator{\dist}{dist}
\DeclareMathOperator{\BMO}{BMO}

\newcommand{\R}{\mathbb{R}}

\newtheorem{theorem}{Theorem}
\newtheorem{corollary}{Corollary}[theorem]

\title[Duality in a stability problem]{Duality in a stability problem for some functionals arising in interpolation theory}

\author{Anton Tselishchev}

\thanks{This research was supported by the Russian Science Foundation (grant No.~14-21-00035).}

\address{Chebyshev Laboratory, St. Petersburg State University, 14th Line V.O., 29B, Saint Petersburg 199178 Russia}
\address{St. Petersburg Department of Steklov Mathematical Institute, Fontanka 27, St. Petersburg 191023, Russia}
\email{celis-anton@yandex.ru}

\date{}

\begin{document}
\begin{abstract}
By using duality, it is shown that there exist near-minimizers for the distance functionals for the couple $(L^\infty, L^p)$, $1<p<\infty$, that are stable under the action of singular integral operators.
\end{abstract}
\maketitle

\section{Introduction}
Consider the spaces $L^1$ and $L^p$ and the following expression, the distance functional from the function from $L^1$ to the ball of radius $s$ in $L^p$:
$$
 E(s,f;L^1,L^p)=\dist_{L^1}(f, B_{L^p}(s))=\inf\{\|f-g\|_{L^1}: \|g\|_{L^p} \leq s\}.
$$

The book \cite{KK}, among other things, solves the question of the existence of a stable (under the action of a singular integral
  operator) near-minimizer for such functional. Specifically, the following theorem is proved there.
  \begin{theorem}
Let $T$ be a Calder\' on --Zygmund operator and $f\in L^1$ is a function for which $Tf\in L^1$. Then for any $s>0$ there exists
such function $u^{(s)}\in L^1$ that the following conditions hold:
\begin{align*}
   \|u^{(s)}\|_{L^p}\lesssim& s,\\
   \|f-u^{(s)}\|_{L^1}\lesssim& \dist_{L^1}(f,B_{L^p}(s)),\\
   \|Tf-Tu^{(s)}\|_{L^1}\lesssim& \dist_{L^1}(f,B_{L^p}(s))+\dist_{L^1}(Tf,B_{L^p}(s)).
  \end{align*}
\end{theorem}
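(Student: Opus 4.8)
The plan is to exhibit $u^{(s)}$ explicitly as a hard truncation of $f$, corrected by the ``good part'' of a Calder\'on--Zygmund decomposition of the discarded tail. First I would record the elementary fact that the distance functional is realized by truncation: choosing $c=c(s)$ so that $\|\min(|f|,c)\|_{L^p}=s$ and setting $v=\operatorname{sign}(f)\min(|f|,c)$, one has $\|v\|_{L^p}=s$ and $\|f-v\|_{L^1}=\int(|f|-c)_+=\dist_{L^1}(f,B_{L^p}(s))=:D$ (pointwise convexity in the value $g(x)$ shows no competitor does better). Thus $v$ already meets the first two requirements, and the whole difficulty is concentrated in the third. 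Truncation by itself is useless there, since $v$ carries no cancellation and $T$ is unbounded on $L^1$, so $\|T(f-v)\|_{L^1}$ is typically infinite.

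To inject cancellation I would keep $v$ but smooth out the tail $w_0:=f-v=\operatorname{sign}(f)(|f|-c)_+$. Fix a height $\beta>0$, let $\{Q_j\}$ be the Calder\'on--Zygmund cubes of $w_0$ at height $\beta$ (the maximal dyadic cubes on which the mean of $|w_0|$ exceeds $\beta$), and write $w_0=g_{w_0}+b_{w_0}$, where $\|g_{w_0}\|_{L^\infty}\lesssim\beta$, $\|g_{w_0}\|_{L^1}\le\|w_0\|_{L^1}$, and $b_{w_0}=\sum_j b_j$ has $\int_{Q_j}b_j=0$. I then set
\[
 u^{(s)}:=v+g_{w_0},\qquad\text{so that}\qquad f-u^{(s)}=b_{w_0}.
\]
The bad part is mean-zero on each cube, while $\|f-u^{(s)}\|_{L^1}=\|b_{w_0}\|_{L^1}\le 2\|w_0\|_{L^1}=2D$, so the second requirement survives. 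For the first, the trivial bound $\|g_{w_0}\|_{L^p}^p\le\|g_{w_0}\|_{L^\infty}^{p-1}\|g_{w_0}\|_{L^1}\lesssim\beta^{p-1}D$ gives $\|u^{(s)}\|_{L^p}\le\|v\|_{L^p}+\|g_{w_0}\|_{L^p}\lesssim s$, \emph{provided} $\beta^{p-1}D\lesssim s^p$.

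Everything now rests on estimating $Tb_{w_0}$. Splitting $\R^n$ into $\Omega^*:=\bigcup_j 2Q_j$ and its complement, the exterior integral is the classical one: the mean-zero property of each $b_j$ together with the H\"ormander smoothness of the kernel yields $\int_{\R^n\setminus\Omega^*}|Tb_{w_0}|\lesssim\sum_j\|b_j\|_{L^1}\lesssim D$. For the interior I would not estimate $Tb_{w_0}$ directly; instead I write $Tb_{w_0}=Tf-Tu^{(s)}$ and bound each piece on $\Omega^*$ by H\"older. The $L^p$-boundedness of $T$ and $\|u^{(s)}\|_{L^p}\lesssim s$ give $\int_{\Omega^*}|Tu^{(s)}|\le|\Omega^*|^{1/p'}\|Tu^{(s)}\|_{L^p}\lesssim|\Omega^*|^{1/p'}s$; and, introducing a near-minimizer $V$ of $Tf$ (here the hypothesis $Tf\in L^1$ is essential, as it makes $\dist_{L^1}(Tf,B_{L^p}(s))$ finite and furnishes $V$ with $\|V\|_{L^p}\le s$), $\int_{\Omega^*}|Tf|\le\|Tf-V\|_{L^1}+|\Omega^*|^{1/p'}\|V\|_{L^p}\lesssim\dist_{L^1}(Tf,B_{L^p}(s))+|\Omega^*|^{1/p'}s$. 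Since $|\Omega^*|\lesssim\sum_j|Q_j|\le\|w_0\|_{L^1}/\beta=D/\beta$, both stray terms are $\lesssim(D/\beta)^{1/p'}s$.

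The heart of the matter is that the two competing constraints on $\beta$ are compatible: the $L^p$-bound wants $\beta^{p-1}\lesssim s^p/D$, i.e.\ $\beta\lesssim s^{p'}/D^{p'-1}$, while the interior bound $(D/\beta)^{1/p'}s\lesssim D$ wants $\beta\gtrsim s^{p'}/D^{p'-1}$; the exponents match exactly (this is where $1<p<\infty$ is used), so the single choice $\beta\approx s^{p'}/D^{p'-1}$ makes $\|g_{w_0}\|_{L^p}\lesssim s$ and $(D/\beta)^{1/p'}s\lesssim D$ at once. Combining exterior and interior then gives $\|Tf-Tu^{(s)}\|_{L^1}\lesssim D+\dist_{L^1}(Tf,B_{L^p}(s))$, which is the third assertion. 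I expect the genuine work to be precisely this balancing together with the interior estimate: the mean-zero cancellation that tames the exterior term is too expensive in $L^p$ to also govern the interior, so the interior must instead be absorbed geometrically through $|\Omega^*|$ and through the a priori smallness of $Tf$, and it is only the exact matching of the two powers of $\beta$ that lets one pay both bills simultaneously.
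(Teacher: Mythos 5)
Your proposal is correct and follows essentially the same route as the paper's (Bourgain's) argument: an initial near-minimizer plus the good part of a Calder\'on--Zygmund decomposition of the tail at the balanced height $\beta^{p-1}D\approx s^p$ (the paper's $\lambda^{p-1}a=b^p$), with the exterior of $\bigcup 2Q_j$ handled by mean-zero cancellation and H\"ormander regularity and the interior by H\"older together with a near-minimizer of $Tf$, exactly as in Section 2 of the paper. Your only deviation is the observation that the truncation $\operatorname{sign}(f)\min(|f|,c)$ \emph{exactly} realizes the distance --- a pleasant refinement, but immaterial, since the paper simply starts from an arbitrary $2$-near-minimizer.
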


Here we say that $A\lesssim B$ if $A\leq CB$ for some constant $C$. It will always be clear from the context from which parameters $C$ can depend and from which it can not (or it will be stated explicitly). Here these constants do not depend on $s$ and $f$.

The first two conditions in this theorem mean that $u^{(s)}$ is a near-minimizer for the distance functional for $f$ at $s$ and the third one says that $Tu^{(s)}$ behaves much like the near-minimizer for the distance functional for $Tf$ at $s$ (in particular, it will be the near-minimizer if the second term majorizes the first one). The main method of proof
is the approach of Bourgain from the paper \cite{Bo}. Stable near-minimizer is constructed almost
explicitly --- srecifically, an arbitrary near-minimizer turns into a stable one by adding the “good” part of
  Calder\' on--Zygmund decomposition
  
  In addition to the application of stability theorems to the interpolation theory, it is not difficult to prove, for example, the following corollary of the above theorem.
  (which is also proved in the book \cite{KK}).
  \begin{corollary}
  If $T$ and $f$ are as above, then there exist functions $f_k$ in $L^1\cap L^p$ convergent to $f$ in $L^1$ and such that $Tf_k$ are all in $L^1$ and $\|Tf_k-Tf\|_{L^1}\rightarrow 0$. 
  \end{corollary}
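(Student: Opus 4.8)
The plan is to apply the preceding theorem along a sequence $s_k\to\infty$ and to set $f_k:=u^{(s_k)}$. The whole argument hinges on the observation that both distance functionals appearing on the right-hand sides of the theorem vanish as $s\to\infty$, so I would begin by establishing that fact.

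First I would show $\dist_{L^1}(f,B_{L^p}(s))\to 0$ as $s\to\infty$. This follows from the density of $L^1\cap L^p$ in $L^1$: given $\varepsilon>0$, pick $g\in L^1\cap L^p$ (for instance a truncation $f\mathbf{1}_{\{|f|\le N\}}\mathbf{1}_{B(0,R)}$, which is bounded with compact support) such that $\|f-g\|_{L^1}<\varepsilon$; then for every $s\ge\|g\|_{L^p}$ we have $g\in B_{L^p}(s)$, and hence $\dist_{L^1}(f,B_{L^p}(s))\le\|f-g\|_{L^1}<\varepsilon$. The same reasoning applied to $Tf$, which lies in $L^1$ by hypothesis, gives $\dist_{L^1}(Tf,B_{L^p}(s))\to 0$ as well.

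Now fix $s_k=k$ and let $f_k:=u^{(s_k)}$ be the function produced by the theorem. By the first displayed bound $f_k\in L^p$, and $f_k\in L^1$ by the theorem, so $f_k\in L^1\cap L^p$. The second bound gives $\|f-f_k\|_{L^1}\lesssim\dist_{L^1}(f,B_{L^p}(s_k))\to 0$, i.e.\ $f_k\to f$ in $L^1$. Finally, the third bound shows that $\|Tf-Tf_k\|_{L^1}$ is finite --- so that $Tf_k=Tf-T(f-f_k)\in L^1$, using the linearity of $T$ together with the hypothesis $Tf\in L^1$ --- and that this quantity is $\lesssim\dist_{L^1}(f,B_{L^p}(s_k))+\dist_{L^1}(Tf,B_{L^p}(s_k))\to 0$, whence $\|Tf_k-Tf\|_{L^1}\to 0$.

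There is no serious obstacle here: the corollary is an essentially immediate consequence of the theorem, and the only thing requiring any argument at all is the vanishing of the two distance functionals, which is the elementary density computation above. The one point demanding care is the interpretation of $Tf_k$: since it is not a priori clear that a Calder\'on--Zygmund operator maps $L^1$ into $L^1$, the membership $Tf_k\in L^1$ must be read off from the finiteness of the $L^1$-estimate in the theorem's third conclusion rather than from any boundedness of $T$ on $L^1$.
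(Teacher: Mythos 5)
Your proof is correct and follows exactly the route the paper intends: the paper states the corollary as a direct consequence of Theorem~1 (deferring details to the book \cite{KK}), obtained by taking $f_k=u^{(s_k)}$ with $s_k\to\infty$ and using the elementary density fact that both distance functionals $\dist_{L^1}(f,B_{L^p}(s))$ and $\dist_{L^1}(Tf,B_{L^p}(s))$ tend to zero. Your care about reading $Tf_k\in L^1$ off the finiteness of the third estimate, rather than from any $L^1$-boundedness of $T$, is exactly the right point to flag.
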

  
By the same method, but using some other decompositions instead of the standard Calderon--Zygmund, one can get
  similar theorems in some other cases that are not considered in the book \cite{KK} --- in particular, the stability theorem
  with respect to projections on wavelets with only a rather weak decay condition at infinity ---
  this is done by the author in the paper \cite{Tsel}.  
  
Singular integral operators are usually discontinuous not only on $L^1$ but also on $L^\infty$. Thus the question of existence of stable under the actions of such operators near-minimizers for couple $(L^\infty, L^p)$, $1<p<\infty$, arises. This problem is not solved in the book \cite{KK} and moreover it is hard to expect that such near-minimizers can be constructed explicitly in any sense. The goal of this article is to show by using duality that such near-minimizers do exist.

The author is kindly greatful to his scientific advisor, S. V. Kislyakov, for posing these problems and for the continuous support during the process of their solutions.

\section{The application of duality}
We note that the problem of the existence of stable near-minimizers is connected to another, more classical --- the problem of $K$-closedness of a certain pair of subspaces. The definition of the notion of $K$-closedness introduced in the paper \cite{Pis} is as follows.
Let $(X_0, X_1)$ be a compatible pair of Banach spaces (which means that they are embedded into some topological vector space) and $Y_0$ and $Y_1$ are closed
subspaces of $X_0$ and $X_1$ respectively. Then this pair of subspaces is called $K$-closed in $(X_0, X_1)$ if there exists a constant $C$ such that for any representation of element
$y \in Y_0 + Y_1$ in the form $y = x_0 + x_1$, where $x_0 \in X_0$, $x_1 \in X_1$, we can find another representation $y = y_0 + y_1$, where
$y_0$ and $y_1$ are in $Y_0$ and $Y_1$ respectively, and we can control their norms: $\|y_0\|_{Y_0} \leq C \|x_0\|_{X_0}$,
$\|y_1\|_{Y_1}\leq C \|x_1\|_{X_1}$. The concept of $K$-closedness plays an important role in interpolation theory --- if one knows
the interpolation space for the pair $(X_0, X_1)$ (denoted by the symbol
$(X_0, X_1)_{\theta, q}$ for some $\theta$ and $q$),
and a pair of subspaces $(Y_0, Y_1)$ is $K$-closed in it, then the corresponding
interpolation space for the pair $(Y_0, Y_1)$ is easy to determine --- the following equality is true:
$$
(Y_0, Y_1)_{\theta,q}=(X_0, X_1)_{\theta, q}\cap(Y_0+Y_1).
$$

Let $T$ be a singular integral operator (or a projection related to wavelets from the paper \cite{Tsel},
in this case, $L^p$ stands for $L^p(\R)$ and instead of
Calder\' on--Zigmund decomposition one should simply use the decomposition described in that paper). For any finite $p$
we denote by $X_p$  the space $L^p \oplus L^p$, and $Y_p=\{(f, Tf):f\in L^p, Tf\in L^p\}$ denotes its subspace.
Clearly, $Y_p$ is a closed subspace of $X_p$ (for $p>1$ this is obvious,
since $T$ is a bounded operator on $L^p$, and
for $p=1$ it is also easy in view of the fact that $T$ is an operator of weak type $(1,1)$).
It is known that the question about $K$-closedness of couple $(Y_1, Y_p)$ in $(X_1, X_p)$ (for example, in one-dimensional case, if one sets $T$ to be the Riesz projection then this question transforms into the problem of $K$-closedness of couple of Hardy spaces $(H^1, H^p)$ in the couple $(L^1, L^p)$) is solved positively, see for example \cite{K} or \cite{KX}.

For an element $(u, Tu)\in Y_1+Y_p$ we write:
$$
(u,Tu)=(u_0, v_0)+(u_1, v_1)\in X_1+X_p.
$$

When we investigate the question of $K$-closedness, we are interested in norms of summands in the right hand side of the equation in $L^1\oplus L^1$ and $L^p\oplus L^p$ respectively, so we will assume that  $\|u_0\|_{L^1}\leq a$, $\|v_0\|_{L^1}\leq a$, $\|u_1\|_{L^p}\leq b$, 
$\|v_1\|_{L^p}\leq b$. In this case from the $K$-closedness of the couple $(Y_1, Y_p)$ in $(X_1, X_p)$ we get the following decomposition:
$$
(u,Tu)=(v, Tv) + (w, Tw),
$$
where $\|v\|_{L^1}\lesssim a$, $\|Tv\|_{L^1}\lesssim a$, $\|w\|_{L^p}\lesssim b$, $\|Tw\|_{L^p}\lesssim b$.

We now come back to the problem of the existence of a stable near-minimizer for the couple $(L^1, L^p)$. It can also be rewritten ih the similar terms. Indeed, if we fix a positive number $b$, then we can take $u_0$ and $v_0$ such that their norms in
$L^1$ does not exceed $2\dist_{L^1}(f,B_{L^p}(b))$ and $2\dist_{L^1}(Tf,B_{L^p}(b))$, respectively
(and $u_1$ and $v_1$ can be taken from a ball of radius $b$ in $L^p$). Thus, we write:
$$
(u, Tu)=(u_0, v_0) + (u_1,v_1),
$$
where $\|u_0\|_{L^1}\leq a$, $\|u_1\|_{L^p}\leq b$, $\|v_1\|_{L^p}\leq b$, $\|v_0\|_{L^1}\leq c$. Our goal is to construct a decomposition of the form $(u, Tu)=(\alpha, T\alpha)+(\beta, T\beta)$ where the norms of the summands are controlled by the same numbers $a$, $b$ and $c$. Note that, in essence, this is exactly what is done when applying the Bourgain method in the proof of the theorem 1 in the book \cite{KK} (as well as in the proof of Theorem 2 in \cite{Tsel}) but for the sake of completeness we repeat this argument here.
So, if $u_0=g+h$ is the Calder\' on--Zygmund decomposition on the level $\lambda$ ($g$ is the "good" part, $h$ is "bad"),
where $\lambda$ is such that $\lambda^{p-1}a=b^p$, then $(u, Tu)=(h, Th)+(u_1+g, T(u_1+g))$. Besides that, 
$\|h\|_{L^1}\lesssim a$ and $\|u_1+g\|_{L^p} \leq b +\|g\|_{L^p}\lesssim b$ because $|g|\lesssim\lambda$ and $\|g\|_{L^1}\lesssim a$ (and so $\|g\|_{L^p}=(\int |g|^p)^{1/p}\lesssim (\lambda^{p-1}a)^{1/p}=b$). Since $T$ is bounded on $L^p$, the inequality 
$\|T(u_1+g)\|_{L^p}\lesssim b$ is also true. Let us denote the cubes from the Calder\' on--Zygmund decomposition by $\{Q_i\}$ and let $\Omega$ be $\cup 10Q_i$. Then, using the properties of the operator $T$ ("long-range $L^1$ regularity", using the terminology of book \cite{KK}), $\|Th\|_{L^1(\Omega^c)} \lesssim a$ where $\Omega^c$ denotes the complement of set $\Omega$. In order to estimate the integral over $\Omega$, we use the H\" older inequality in the following way:
$$
\|Th\|_{L^1(\Omega)}=\|v_0+v_1-T(u_1+g)\|_{L^1(\Omega)}\lesssim c + |\Omega|^{1/p'}b.
$$
Measure of set $\Omega$ can be estimated by $\frac{\|u_0\|_{L^1}}{\lambda}\leq\frac{a}{\lambda}$. According to our choise of 
$\lambda$ we get the inequality $\|Th\|_{L^1(\Omega)}\lesssim a+c$. Thus, once we have the decomposition of pair $(u, Tu)\in X_1+X_p$ stated above, we get the following decomposition:
$$
(u,Tu)=(h, Th)+(u_1+g, T(u_1+g))\in Y_1+Y_p,
$$
where $\|h\|_{L^1}\lesssim a$, $\|u_1+g\|_{L^p}\lesssim b$, $\|T(u_1+g)\|_{L^p}\lesssim b$,
$\|Th\|_{L^1}\lesssim a+c$.

This discussion shows that the stability problem is more delicate than the problem about $K$-closedness stated above --- some part of information is not used in the $K$-closedness problem.

For a subspace $Y$ of a Banach the space $X$ we denote by $Y^\perp$ the annihilator of $Y$ --- such elements $x^*\in X^*$ that
$\langle y, x^*\rangle =0$ for all $y\in Y$. It is clear that $Y^\perp$ is a weak-* closed subspace of $X^*$. In the paper \cite{Pis} it is noted that the $K$-closedness of the couple of subspaces $(W_0, W_1)$ in $(Z_0, Z_1)$ is equivalent to $K$-closedness of the couple of their annihilators, $(W_0^\perp, W_1^\perp)$, in $(Z_0^*, Z_1^*)$. The exposition of this fact can be found in the paper
\cite{K}. For the problem of near-minimizers, a similar argument can be made and we pass to it.

It is not difficult to realise what is the annihilator of the subspace $Y_p$. Let $(\alpha,\beta)\in Y_p^\perp$, then
$\langle (f,Tf), (\alpha,\beta)\rangle=0$ for all $f$, which can be rewritten as $\langle f,\alpha+T^*\beta\rangle=0$ and
thus $Y_p^\perp$ has the form $\{ (-T^* \beta, \beta) : \beta \in L^{p'}\}$. What we have just written is true only in
the case $1<p<\infty$, when $T^*$ is a bounded operator on $L^p$. The case, for example, $p'=\infty$, should be treated with
caution --- in particular, then $T^*\beta$ is an element of the space $\BMO$, in which functions are defined only up to the constant. This (again, for the $K$-closedness question) is written, for example, in the paper \cite{KX}. According to this discussion,
in the formulation of the theorem we will assume that the original function $ f $
lies in $L^\infty\cap L^q$ for some $q<\infty$ --- then the function
$T^* f$ is uniquely defined (as a function in $L^q$). Note that if $f\in L^\infty\cap L^q$ and $T^*f\in L^\infty$, then
$(-T^*f,f)\in Y_1^\perp$. Indeed, according to Corollary 1.1, for every function $g\in L^1 $, such that $Tg \in L^1$,
there exists a sequence of functions $g_n \in L^1 \cap L^{q'}$ such that $Tg_n\in L^1$, $g_n\rightarrow g$ and
$Tg_n\rightarrow Tg$ (both convergences are in $L^1$). Therefore it is enough for
$\langle(-T^*f,f), (g,Tg)\rangle$ to be equal to zero for all $g\in L^1\cap L^{q'}$, and this is
obviously true because $f\in L^q$, and
$T$ is bounded on $L^q$ and $L^{q'}$.

After all the remarks we made, it is not difficult to prove the theorem about the existence of a stable near-minimizer for the pair $(L^\infty, L^p)$.

\begin{theorem}
Suppose that $1<q<\infty$, f is a function from $L^\infty\cap L^q$ such that $T^*f\in L^\infty$ and $1<p<\infty$. Then for any $s>0$ there exists a function $v^{(s)}\in L^p$ for which the following conditions hold:
\begin{align*}
  \|v^{(s)}\|_{L^p}\lesssim& s,\\
  \|f-v^{(s)}\|_{L^\infty}\lesssim& \dist_{L^\infty}(f,B_{L^p}(s)),\\
  \|T^*f-T^*v^{(s)}\|_{L^\infty}\lesssim& \dist_{L^\infty}(f, B_{L^p}(s))+\dist_{L^\infty}(T^*f, B_{L^p}(s)).
 \end{align*}
\end{theorem}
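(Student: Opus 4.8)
The plan is to mirror, on the side of the annihilators, the reformulation carried out in Section 2, and then to pass from the primal decomposition established there to its dual by a Hahn--Banach argument. Throughout put $r=p'$, so that $X_r^*=L^p\oplus L^p$ and, as computed above, $Y_r^\perp=\{(-T^*\psi,\psi):\psi\in L^p\}$, while $X_1^*=L^\infty\oplus L^\infty$. Write $d_1=\dist_{L^\infty}(f,B_{L^p}(s))$ and $d_2=\dist_{L^\infty}(T^*f,B_{L^p}(s))$, and set $\xi=(-T^*f,f)$. The hypotheses $f\in L^\infty\cap L^q$ and $T^*f\in L^\infty$ guarantee, by Corollary 1.1 and exactly as explained before the statement, that $T^*f$ is a genuine function and that $\xi\in Y_1^\perp$. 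First I would observe that producing $v^{(s)}$ with the three required properties is the same as decomposing $\xi=\xi_0+\eta$ with $\eta=(-T^*v^{(s)},v^{(s)})\in Y_r^\perp$, $\|\eta\|_{X_r^*}\lesssim s$, and $\xi_0\in X_1^*$ obeying the asymmetric box bound $\|\pi_1\xi_0\|_{L^\infty}\lesssim d_1+d_2$, $\|\pi_2\xi_0\|_{L^\infty}\lesssim d_1$, where $\pi_1,\pi_2$ are the coordinate projections. Indeed $\xi-\eta=\big(-(T^*f-T^*v^{(s)}),\,f-v^{(s)}\big)$, and reading off the two coordinates gives precisely the third and the second inequalities of the theorem, while $\|\eta\|_{X_r^*}\lesssim s$ gives the first. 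This is the annihilator analogue of the primal splitting $(u,Tu)=(\alpha,T\alpha)+(\beta,T\beta)$ from Section 2.

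To produce such a decomposition I would invoke the duality for $K$-closedness (as in \cite{Pis}, \cite{K}), but keeping track of the refined three-parameter bounds. Concretely, the desired decomposition exists precisely when $\xi$ lies in a fixed multiple of the convex set $K=R+(Y_r^\perp\cap sB_{X_r^*})$, where $R\subset X_1^*$ is the box $\{\zeta:\|\pi_1\zeta\|_{L^\infty}\le d_1+d_2,\ \|\pi_2\zeta\|_{L^\infty}\le d_1\}$; since $L^p\oplus L^p$ is a dual space, Banach--Alaoglu makes $Y_r^\perp\cap sB_{X_r^*}$ weak-$*$ compact, so $K$ is weak-$*$ closed and the bipolar theorem applies. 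Computing the support function of $K$ against a test pair $(\phi,\psi)\in(L^1\cap L^r)^2$ reduces the whole statement to the single inequality
\[
 \langle f,\psi-T\phi\rangle \lesssim (d_1+d_2)\|\phi\|_{L^1}+d_1\|\psi\|_{L^1}+s\,\dist_{X_r}\big((\phi,\psi),Y_r\big).
\]
The asymmetry $(d_1+d_2,d_1)$ here is dual to the asymmetry $(a,a+c)$ of the $Y_1$-bounds in Section 2, with the two coordinates interchanged, the interchange coming from the fact that elements of $Y_1$ have the form $(\cdot,T\cdot)$ whereas those of $Y_1^\perp$ have the form $(-T^*\cdot,\cdot)$.

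The crux is the verification of this inequality, and here the full strength of Section 2 is needed. For any $g\in L^r$ one has $\langle f,\psi-T\phi\rangle=\langle f,\psi-Tg\rangle+\langle T^*f,g-\phi\rangle$, and the duality of the distance functionals for $f$ and for $T^*f$, namely $\langle f,z\rangle\le d_1\|z\|_{L^1}+s\|z\|_{L^r}$ and $\langle T^*f,z\rangle\le d_2\|z\|_{L^1}+s\|z\|_{L^r}$, yields
\[
 \langle f,\psi-T\phi\rangle\le d_1\|\psi-Tg\|_{L^1}+d_2\|g-\phi\|_{L^1}+s\big(\|\psi-Tg\|_{L^r}+\|g-\phi\|_{L^r}\big).
\]
Taking $g$ nearly optimal for $\dist_{X_r}((\phi,\psi),Y_r)$ controls the $L^r$-errors by the distance but leaves the $L^1$-errors unbounded, whereas the choice $g=0$ controls the $L^1$-errors by $(d_1+d_2)\|\phi\|_{L^1}+d_1\|\psi\|_{L^1}$ but ruins the $L^r$-errors; the difficulty, and the whole point, is to select a single $g$ that does both at once. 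This is exactly what the Calder\'on--Zygmund decomposition supplies: splitting at a level $\lambda$ chosen as in Section 2 (the analogue of the relation $\lambda^{p-1}a=b^p$) produces a good function in the role of $g$ for which the $L^1$-errors are dominated by the $L^1$-data and the $L^r$-errors by the distance. I expect this simultaneous control to be the main obstacle; it is the precise point at which the stability problem is genuinely stronger than the $K$-closedness problem, just as remarked after the primal reformulation. Granting the inequality, the bipolar theorem returns the decomposition $\xi=\xi_0+\eta$, and $\eta=(-T^*v^{(s)},v^{(s)})$ exhibits the required near-minimizer $v^{(s)}\in L^p$.
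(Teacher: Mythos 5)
Your reduction of the theorem to a membership statement $\xi=(-T^*f,f)\in cK$, $K=R+(Y_{p'}^\perp\cap sB_{X_{p'}^*})$, and then via separation/bipolar to the single support-function inequality, is sound, and at the level of architecture it matches the paper: the paper also argues by weak-$*$ compactness and Hahn--Banach separation, proving $(-T^*f,f)\in U_\infty+U_p$, where $U_\infty$ is essentially your box $R$ intersected with $Y_1^\perp$ and $U_p$ is your $Y_{p'}^\perp\cap csB$. Your enlargement of $U_\infty$ to the full box $R$ is legitimate (the constraint $\xi_0\in Y_1^\perp$ is never needed for the three conclusions, since $\xi-\eta=(-T^*(f-v^{(s)}),f-v^{(s)})$ automatically), and it is precisely what makes the support function of $R$ equal to $(d_1+d_2)\|\phi\|_{L^1}+d_1\|\psi\|_{L^1}$, letting you bypass the paper's step of lifting the separating functional from the quotient $L^1\oplus L^1/Y_1$. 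Where you genuinely diverge is afterwards: the paper lifts $F$ to pairs $(\phi_0,\phi_1)\in L^1\oplus L^1$ and $(\psi_0,\psi_1)\in L^{p'}\oplus L^{p'}$, observes that their difference lies in $Y_1+Y_{p'}$, and applies the three-parameter decomposition from the beginning of Section 2 to that difference, whereas you must verify your displayed inequality directly for every test pair $(\phi,\psi)$.

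That verification is exactly where the gap sits: you declare it the crux but do not carry it out, and the one concrete instruction you give --- that the Calder\'on--Zygmund splitting ``produces a good function in the role of $g$'' --- fails as stated. If $g$ is the good part $\gamma$ of $\phi$, then $\phi-g$ is the bad part $h$, whose $L^{p'}$ norm is of the size of $\|\phi\|_{L^{p'}}$ and not of $b:=\dist_{X_{p'}}((\phi,\psi),Y_{p'})$; moreover $\psi-T\gamma$ contains $Th$ on $\Omega$, which is precisely the quantity for which no direct estimate exists. The roles must be reversed: take a near-best $(w,Tw)\in Y_{p'}$, whose only use is to give $\|\psi-T\phi\|_{L^{p'}}\leq\|\psi-Tw\|_{L^{p'}}+\|T(\phi-w)\|_{L^{p'}}\lesssim b$ by the $L^{p'}$-boundedness of $T$; decompose $\phi=\gamma+h$ at the level $\lambda$ with $\lambda^{p'-1}\|\phi\|_{L^1}=b^{p'}$; and set $g=h$, the \emph{bad} part. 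Then $\phi-g=\gamma$ satisfies $\|\gamma\|_{L^1}\leq\|\phi\|_{L^1}$ and $\|\gamma\|_{L^{p'}}\lesssim(\lambda^{p'-1}\|\phi\|_{L^1})^{1/p'}=b$; the term $\psi-Tg=(\psi-T\phi)+T\gamma$ has $L^{p'}$ norm $\lesssim b$; and its $L^1$ norm is handled as in Section 2: with $\Omega=\cup 10Q_i$, long-range regularity gives $\|Th\|_{L^1(\Omega^c)}\lesssim\|\phi\|_{L^1}$ (so $\|\psi-Th\|_{L^1(\Omega^c)}\lesssim\|\psi\|_{L^1}+\|\phi\|_{L^1}$), while H\"older on $\Omega$ gives $\|(\psi-T\phi)+T\gamma\|_{L^1(\Omega)}\lesssim|\Omega|^{1/p}b\lesssim(\|\phi\|_{L^1}/\lambda)^{1/p}b=\|\phi\|_{L^1}$. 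Feeding these four bounds into your splitting $\langle f,\psi-T\phi\rangle=\langle f,\psi-Tg\rangle+\langle T^*f,g-\phi\rangle$ yields exactly the asymmetry $(d_1+d_2)\|\phi\|_{L^1}+d_1\|\psi\|_{L^1}+s\,b$, and the bipolar argument then closes the proof. So the proposal is completable and would constitute a correct, somewhat streamlined variant of the paper's argument, but as written the decisive estimate is missing and the hint points to a choice of $g$ that does not work.
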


\begin{proof}
Suppose $(-T^*f,f)$ is decomposed in the following way:
$$
 (-T^*f,f)=(\alpha_0, \alpha_1)+(\beta_0,\beta_1),
$$
where $\|\alpha_0\|_{L^\infty}\leq t$, $\|\alpha_1\|_{L^\infty}\leq r$, $\|\beta_0\|_{L^p}\leq s$, 
 $\|\beta_1\|_{L^p}\leq s$. We are going to show that in this case $(-T^*f,f)\in U_\infty+U_{p}$ where
 \begin{align*}
  &U_{\infty}=\{(v_0, v_1)\in Y_1^\perp:\|v_0\|_{L^\infty}\leq c(t+r), \|v_1\|_{L^\infty}\leq cr\},\\
  &U_p=\{(w_0, w_1)\in L^p\oplus L^p: \|w_0\|_{L^p}\leq cs, \|w_1\|_{L^p}\leq cs, w_0=-T^*w_1\}\subset Y_{p'}^\perp.
 \end{align*}
 Here the constant $c$ will be chosen later. It is clear that the statement of the theorem follows from this --- as in the beginning of this section, it is enough to take $r=2\dist_{L^\infty}(f,B_{L^p}(s))$ and $t=2\dist_{L^\infty}(T^*f, B_{L^p}(s))$ (and to denote by $\beta_0$ and $\beta_1$ the corresponding near-minimizers).
 
So we suppose it is not true. It is easy to see that $U_\infty+U_p$ is a weakly-* compact
 subset of $L^\infty\oplus L^\infty+L^p\oplus L^p$. Therefore, if the point $(-T^*f,f)$
does not lie in the set $U_\infty+U_p$, then they can be separated by a weakly-* continuous functional on
$L^\infty\oplus L^\infty+L^p\oplus L^p$, that is, we can find the element $F\in L^1\oplus L^1\cap L^{p'}\oplus L^{p'}$ such that
$$\langle (-T^*f, f), F\rangle>1$$ and $$\sup \{|\langle x, F\rangle|:x\in U_{\infty}+U_p\}<1.$$ The second of these equations
gives an estimate for the norm of the functional  $F$ restricted to $Y_1^\perp$ and to $Y_{p'}^\perp$. The functional on
$Y_1^\perp$ can be considered as an element of the factor space $L^1\oplus L^1 / Y_1$, and it can be "lifted" to the element of
$L^1\oplus L^1$, with the norm increasing by no more than twice. By doing the same for $L^{p'}\oplus L^{p'}$, we can get the pairs of
functions $(\phi_0, \phi_1)\in L^1\oplus L^1$ and $(\psi_0, \psi_1)\in L^{p'}\oplus L^{p'}$ such that
$\|\phi_0\|_{L^1}\leq\frac{2}{c(t+r)}$, $\|\phi_1\|_{L^1}\leq\frac{2}{cr}$, $\|\psi_0\|_{L^{p'}}\leq\frac{2}{cs}$,
$\|\psi_1\|_{L^{p'}}\leq\frac{2}{cs}$, and the actions of these pairs as functionals on $Y_1^\perp$ and $Y_{p'}^\perp$ respectively
coincide with the action of $F$. This means, in particular, that $(\phi_0, \phi_1)-(\psi_0,\psi_1)$ annihilates
$Y_1^\perp\cap Y_{p'}^\perp$, and therefore lies in $Y_1+Y_{p'}$. According to what is written at the beginning of this section, it means that this difference can be
written in the following form:
$$
(\phi_0, \phi_1)-(\psi_0,\psi_1)=(v_0, v_1)-(w_0,w_1),
$$
where $(v_0,v_1)\in Y_1$, $(w_0, w_1)\in Y_p$, $\|v_0\|_{L^1}\leq\frac{C}{c}\frac{2}{t+r}$, $\|v_1\|_{L^1}\leq
\frac{C}{c}(\frac{2}{t+r}+\frac{2}{r})$, $\|w_0\|_{L^{p'}}\leq \frac{C}{c}\frac{2}{s}$, 
$\|w_1\|_{L^{p'}}\leq \frac{C}{c}\frac{2}{s}$. Here $C$ is a constant hiding under the sign "$\lesssim$" from the reasoning
at the beginning of this section (or, equivalently, in Theorem 1). We set $G=(\phi_0, \phi_1)-(v_0,v_1)=(\psi_0, \psi_1)-(w_0, w_1)$.
Then, since $(v_0, v_1)\in Y_1$, $G$ coincides with $F$ on $Y_1^\perp$ (as a functional), and since
$(w_0,w_1)\in Y_{p'}$, $G$ coincides with $F$ on $Y_{p'}^\perp$. Since $(-T^*f, f)\in Y_1^\perp+Y_{p'}^\perp$, we write:
$$
\langle (-T^*f,f), F\rangle=\langle (\alpha_0, \alpha_1), -(v_0,v_1)+(\phi_0, \phi_1)\rangle +
\langle (\beta_0, \beta_1), -(w_0, w_1)+(\psi_0, \psi_1)\rangle.
$$
The right hand side is estimated by Hölder's inequality --- it does not exceed
$$
\frac{2(C+1)}{c}\Big(\frac{t}{t+r}+r\Big(\frac{1}{t+r}+\frac{1}{r}\Big)+2\Big)=\frac{6(C+1)}{c}.
$$
Taking $c=10(C + 1)$, we arrive at a contradiction with the fact that 
$$ 
\langle (-T^*f, f), F\rangle>1,
$$ and the theorem is proved.
\end{proof}

In the book \cite{KK} (and the paper \cite{Tsel}) it is noted that Theorem 1 is also true
for the operator $\chi_E T$, where $E$ is an arbitrary measurable
subset of
$\R^d$, if we are considering singular integrals, and a subset of $\R$, if we are considering wavelets. In this regard,
the theorem we just proved can be formulated for the operator $T^*\chi_E$ --- that is, for the function $f$ living on the set $E$
(and continued by zero to the whole space $\R^d$).

\begin{theorem}
Suppose that $1<q<\infty$, f is a function from $L^\infty\cap L^q$ whose support lies in a measurable set $E$ such that 
$T^*f\in L^\infty$ and $1<p<\infty$. Then for any $s>0$ there exists a function $v^{(s)}\in L^p$ with a support lying inside the set 
$E$ for which the following conditions hold:
\begin{align*}
  \|v^{(s)}\|_{L^p}\lesssim& s,\\
  \|f-v^{(s)}\|_{L^\infty}\lesssim& \dist_{L^\infty}(f,B_{L^p}(s)),\\
  \|T^*f-T^*v^{(s)}\|_{L^\infty}\lesssim& \dist_{L^\infty}(f, B_{L^p}(s))+\dist_{L^\infty}(T^*f, B_{L^p}(s)).
 \end{align*}
\end{theorem}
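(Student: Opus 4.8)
The plan is to apply Theorem 2 not to $T$ itself but to the operator $S=\chi_E T$, and afterwards to cut off the resulting near-minimizer to the set $E$. This is legitimate because, as recalled in the remark preceding the statement, Theorem 1 --- and hence the whole chain of reasoning of this section, including the constructive decomposition at its beginning with its constant $C$ --- remains valid when $T$ is replaced by $\chi_E T$. Since the adjoint $(\chi_E T)^{*}=T^{*}\chi_E$ is bounded on every $L^{r}$ with $1<r<\infty$, the operator $S$ satisfies all the hypotheses under which Theorem 2 was established, and I may simply invoke it as a black box.

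First I would check the hypotheses of Theorem 2 for $S$ and $f$. Because $f$ is supported in $E$ we have $\chi_E f=f$, and therefore $S^{*}f=T^{*}\chi_E f=T^{*}f$, which lies in $L^\infty$ by assumption; moreover $f\in L^\infty\cap L^q$ is given. Applying Theorem 2 to $S$ then yields, for every $s>0$, a function $w^{(s)}\in L^p$ with $\|w^{(s)}\|_{L^p}\lesssim s$, with $\|f-w^{(s)}\|_{L^\infty}\lesssim\dist_{L^\infty}(f,B_{L^p}(s))$, and with
$$
\|T^{*}\chi_E f-T^{*}\chi_E w^{(s)}\|_{L^\infty}\lesssim \dist_{L^\infty}(f,B_{L^p}(s))+\dist_{L^\infty}(T^{*}\chi_E f,B_{L^p}(s)).
$$
Next I would set $v^{(s)}:=\chi_E w^{(s)}$, which is supported in $E$, and read off the three required conditions, now for the genuine operator $T^{*}$. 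The bound $\|v^{(s)}\|_{L^p}\le\|w^{(s)}\|_{L^p}\lesssim s$ is immediate, since multiplication by $\chi_E$ cannot increase an $L^{r}$ norm. For the second condition the key point is that $f$ vanishes off $E$, so that $f-\chi_E w^{(s)}=\chi_E(f-w^{(s)})$, whence $\|f-v^{(s)}\|_{L^\infty}\le\|f-w^{(s)}\|_{L^\infty}\lesssim\dist_{L^\infty}(f,B_{L^p}(s))$. For the third I would use the intertwining identities $T^{*}\chi_E f=T^{*}f$ and $T^{*}\chi_E w^{(s)}=T^{*}(\chi_E w^{(s)})=T^{*}v^{(s)}$, together with the equality $\dist_{L^\infty}(T^{*}\chi_E f,B_{L^p}(s))=\dist_{L^\infty}(T^{*}f,B_{L^p}(s))$, to obtain the desired estimate directly from the display above.

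The substantive step is the reduction in the first paragraph: recognizing that it is the $\chi_E T$-version of the machinery that is needed, precisely so that the adjoint becomes $T^{*}\chi_E$ and the cutoff closes up. Everything after that is the bookkeeping of the cutoff, and the only point deserving genuine care is the second condition, where one must exploit that $f$ is supported in $E$ to rewrite $f-\chi_E w^{(s)}$ as $\chi_E(f-w^{(s)})$; without this observation the cutoff could in principle enlarge the $L^\infty$ distance. Beyond this I expect no difficulty apart from confirming that the constructive decomposition really does survive the insertion of $\chi_E$ --- which is exactly what the cited remark asserts.
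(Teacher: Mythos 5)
Your proposal is correct and follows exactly the route the paper takes: Theorem 3 is obtained by invoking Theorem 2 for the operator $\chi_E T$ (whose adjoint is $T^{*}\chi_E$), which is legitimate because Theorem 1 holds for $\chi_E T$. The only difference is that you explicitly carry out the cutoff $v^{(s)}=\chi_E w^{(s)}$ and the identity $f-\chi_E w^{(s)}=\chi_E(f-w^{(s)})$, bookkeeping the paper leaves implicit in its one-line remark.
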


As we mentioned above, this theorem is exactly the theorem 2 for the operator $T^*\chi_E$ which is true because theorem 1 is true for the operator $\chi_E T$. We note that if $E$ is a set of finite measure, then the condition $f\in L^q$ is redundant because the bounded function on $E$ automatically lies in all $L^q$ for any $q$.

\end{document}